\title{A quasi-commutativity property
of the Poisson and composition operators
\label{vers.7}
}
\author{A. Cialdea 
\thanks{Dipartimento di Matematica, Universit\`a della Basilicata, Viale 
dell'Ateneo Lucano 10, 85100, Potenza, Italy. \textit{email:}
cialdea@email.it.}  
\and V. Maz'ya
\thanks{
Department of Mathematical Sciences, M{\&}O Building,
University of Liverpool, Liverpool L69 7ZL, UK, and
Department of Mathematics,
  Link\"oping University, 
  SE-581 83 Link\"oping, Sweden.
\textit{email:} vlmaz@mai.liu.se.}
}
\date{}    % optional
\def\math{\mathscr}
\newtheorem{theorem}{Theorem}
\newtheorem{lemma}{Lemma}
\newenvironment{proof} {{\bf Proof.}}{\hfill \fbox{}\\ \smallskip}
\renewcommand{\@makefnmark}{\hbox
{\@textsuperscript{\normalfont(\@thefnmark)}}}
\newcommand{\reff}[1]{{\rm(\ref{#1})}}
\def\grande{\displaystyle}
\newcommand{\beginrighe}[1]{\begin{eqnarray}\label{#1}\begin{array}{c}\grande}
\def\endrighe{\end{array}\end{eqnarray}}
\def\R{\mathbb{R}}
\def\N{\mathbb{N}}
\def\de{\partial}
\def\a{\alpha}
\def\g{\gamma}
\def\G{\Gamma}
\def\d{\delta}
\def\e{\varepsilon}
\def\h{\eta}
\def\l{\lambda}
\def\n{\nu}
\def\ro{\varrho}
\def\s{\sigma}
\def\t{\tau}
\def\F{\Phi}
\def\O{\Omega}
\def\o{\omega}
\def\tr{\mathop{\rm tr}\nolimits}
\def\Dom{{\math D}}
\def\sign{\mathop{\rm sign}\nolimits}
\def\dive{\mathop{\rm div}\nolimits}
\def\leq{\leqslant}
\def\geq{\geqslant}
\def\tr{\hbox{\rm tr }}
\begin{document}

    \maketitle

\bigskip

    {\small  {\bf Abstract.} 
    Let $\F$ be a real valued function of one real variable,
   let $L$ denote
    an elliptic second order formally 
    self-adjoint differential operator with bounded measurable 
    coefficients, and let $P$ stand for
    the Poisson operator for $L$. A necessary and sufficient
    condition on $\F$ ensuring the equivalence of the Dirichlet 
    integrals of $\F\circ Ph$ and $P(\F\circ h)$ is obtained.
    We illustrate this result by some sharp inequalities
    for harmonic functions.}

    \bigskip \bigskip

\section{Introduction}

In the present article we consider an elliptic second order formally 
self-adjoint differential operator $L$ in a bounded domain $\O$.
We denote by $Ph$ the $L$-harmonic function with the Dirichlet data
$h$ on $\de\O$. The Dirichlet integral corresponding to the operator 
$L$ will be denoted by $\Dom[u]$. We also introduce a real-valued 
function $\F$ on the line $\R$ and denote the composition of 
$\F$ and $u$ by $\F\circ u$.

We want to show that the Dirichlet integrals of the functions
$\F\circ Ph$ and $P(\F\circ h)$ are comparable. First of all, 
clearly, the inequality
  $$
       \Dom[P(\F\circ h)[ \leq \Dom[\F\circ Ph]
 $$
   is valid. 
    Hence  we only need to check the opposite estimate
   \begin{equation}
       \Dom[\F\circ Ph] \leq C\,  \Dom[P(\F\circ h)]\, .
       \label{eq:opposite}
   \end{equation}
   
   We  find a condition on $\F$ which is both necessary and
   sufficient for \reff{eq:opposite}.
   
   Moreover, we prove that the two Dirichlet integrals are 
   comparable if and only if the derivative $\Psi=\F'$ 
   satisfies
   the reverse Cauchy inequality
   \begin{equation}
       {1\over b-a}\int_{a}^{b}\Psi^{2}(t)\, dt 
       \leq C \left({1\over b-a}\int_{a}^{b}\Psi(t)\, dt \right)^{2}
       \label{eq:rci}
   \end{equation}
   for any interval $(a,b)\subset \R$.
  
We add that the constants $C$ appearing in \reff{eq:opposite} and 
\reff{eq:rci} are the same.

At the end of the paper this result is illustrated
for harmonic functions and
for $\Psi(t)=|t|^{\a}$  with $\a>-1/2$. In particular,
we obtain
the sharp inequalities
\begin{equation}
    \int_{\O}|\nabla(|Ph|\,Ph)|^{2}dx\leq
    {3\over 2}\int_{\O}|\nabla P(|h|\,h)|^{2}dx
    \label{eq:intr1}
\end{equation}
for any  $h\in W^{1/2,2}(\de\O)$  and
\begin{equation}
    \int_{\O}|\nabla(Ph)^{2}|^{2}dx\leq
    {4\over 3} \int_{\O}|\nabla P(h^{2})|^{2}dx
    \label{eq:intr2}
\end{equation}
for any nonnegative  $h\in W^{1/2,2}(\de\O)$. Here $P$ is 
the harmonic Poisson operator.

To avoid technical complications connected with non-smoothness of the
boundary,  we only deal with domains 
bounded by surfaces of class $C^{\infty}$, although, in principle,
this restriction can be significantly weakened.

\section{Preliminaries}

All functions in this article are assumed to take real values
and the notation $\de_{i}$ stands for $\de/\de x_{i}$.

Let $L$ be the second order differential operator
$$
Lu=-\de_{i}(a_{ij}(x)\, \de_{j}u)
$$
defined in a bounded domain $\O\subset\R^{n}$.
 
The coefficients 
$a_{ij}$  are measurable and bounded. The operator $L$ is uniformly 
 elliptic, i.e. there exists $\l>0$ such that
 \begin{equation}
     a_{ij}(x)\xi_{i}\xi_{j}\geq \l\, |\xi|^{2}
     \label{eq:ellipt}
 \end{equation}
 for all $\xi\in\R^{n}$ and for almost every
 $x\in\O$.

Let $\Psi$ be a  function 
defined on $\R$ such that, for any $N\in \N$, the functions
\begin{equation}
    \Psi_{N}(t)=\cases{\Psi(t) & if  $\Psi(t)|\leq N$\cr
    N \sign (\Psi(t)) & if $|\Psi(t)|>N$}
    \label{eq:defpsik}
\end{equation}
are continuous.
We suppose that there exists a constant $C$
such that,
for any finite interval $\s\subset \R_{+}$, we
have
\begin{equation}
    \overline{\Psi^{2}} \leq  C (\overline{\Psi})^{2}
    \label{eq:Psi}
\end{equation}
where $\overline{u}$ denotes the mean value of $u$ on $\s$.

Also let
$$
\F(t)=\int_{0}^{t}\Psi(\t)\, d\t \, ,\quad
t\in\R.
$$

Let $W^{1/2,2}(\de\O)$ be the trace space for the Sobolev space
$W^{1,2}(\O)$ and
let $P$ denote the Poisson operator, i.e. the solution operator:
$$W^{1/2,2}(\de\O)\ni h \to u\in W^{1,2}(\O)$$
 for the Dirichlet problem
\begin{equation}
    \cases{Lu=0 & in $\O$\cr
    \tr u = h  & on $\de\O$}
    \label{eq:dir}
\end{equation}
where $\tr u$ is the trace of the function $u\in W^{1,2}(\O)$
on $\de\O$.

We introduce the Dirichlet integral
$$
\Dom[u]=\int_{\O}a_{ij}\de_{i}u\, \de_{j}u\, dx
$$
and the bilinear form
$$
\Dom[u,v]=\int_{\O}a_{ij}\de_{i}u\, \de_{j}v\, dx\, .
$$

In the sequel we shall consider 
$\Dom[P(\F\circ h)]$ and $\Dom[\F\circ Ph]$ for $h\in W^{1/2,2}(\de\O)$.
Since $h\in W^{1/2,2}(\de\O)$
 implies neither  $\F\circ h\in  W^{1/2,2}(\de\O)$
 nor $\F\circ Ph\in W^{1,2}(\O)$,
we have to specify what $\Dom[P(\F\circ h)]$ and $\Dom[\F\circ Ph]$
mean. 

We define
\begin{equation}
    \Dom[P(\F\circ h)]=\liminf_{k\to\infty}
    \Dom[P(\F_{k}\circ h)].
    \label{eq:defmin}
\end{equation}
where 
\begin{equation}
    \F_{k}(t)=\int_{0}^{t}\Psi_{k}(t)\, dt\, .
    \label{eq:defFn}
\end{equation}
and $\Psi_{k}$ is given by \reff{eq:defpsik}.
Note that if $h$ be in $W^{1/2,2}(\de\O)$, 
$\Dom[P(\F_{k}\circ h)]$ makes sense. In fact
$$
|\F_{k}\circ h|\leq k\, |h| \ \ \hbox{\rm and }\ 
|\F_{k}\circ h(x)-\F_{k}\circ h(y)| \leq k\, |h(x)-h(y)|,
$$
imply that $\F_{k}\circ h$ belongs
to $W^{1/2,2}(\de\O)$.

In order to accept definition \reff{eq:defmin},
we have to show that  if the left hand side 
of \reff{eq:defmin} makes sense because 
$\F \circ h$ belongs to $W^{1/2,2}(\de\O)$, then
\reff{eq:defmin} holds. In fact, we have:
\begin{lemma}
    Let $h\in W^{1/2,2}(\de\O)$ be such that also
$\F \circ h$ belongs to $W^{1/2,2}(\de\O)$. Then 
\begin{equation}
    \Dom[P(\F\circ h)]=\lim_{k\to\infty}\Dom[P(\F_{k}\circ h)].
    \label{eq:lemma}
\end{equation}
\end{lemma}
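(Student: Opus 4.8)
The plan is to prove \reff{eq:lemma} by establishing the stronger fact that $\F_k\circ h\to\F\circ h$ \emph{in} $W^{1/2,2}(\de\O)$. Once that is known, since the Poisson operator $P$ is bounded from $W^{1/2,2}(\de\O)$ into $W^{1,2}(\O)$ one gets $P(\F_k\circ h)\to P(\F\circ h)$ in $W^{1,2}(\O)$, and since $u\mapsto\Dom[u]$ is continuous on $W^{1,2}(\O)$ (the coefficients $a_{ij}$ being bounded) this yields $\Dom[P(\F_k\circ h)]\to\Dom[P(\F\circ h)]$, which is exactly \reff{eq:lemma} and, incidentally, shows that the $\liminf$ in \reff{eq:defmin} is a genuine limit consistent with the ordinary Dirichlet integral whenever the latter makes sense.

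I would start with two preliminary remarks. First, $\Psi\in L^1_{loc}(\R)$ (otherwise $\F$ is undefined), and by \reff{eq:defpsik} one has $\Psi_k\to\Psi$ pointwise with $|\Psi_k|\le|\Psi|$; dominated convergence then gives $\Psi_k\to\Psi$ in $L^1$ of every bounded interval, whence $\F_k(s)-\F(s)-\F_k(t)+\F(t)=\int_t^s(\Psi_k-\Psi)\to0$ for all $s,t\in\R$. Second, by \reff{eq:Psi} the function $\Psi$ cannot change sign (a vanishing mean would force a vanishing mean of $\Psi^2$), and replacing $\Psi$ by $-\Psi$ alters neither the hypotheses nor the quantities $\Dom[P(\F_k\circ h)]$, $\Dom[P(\F\circ h)]$; hence I may assume $\Psi\ge0$, so that $0\le\Psi_k\le\Psi$ and $\F,\F_k$ are non-decreasing with $\F(0)=\F_k(0)=0$. (Note also that $\F_k\circ h-\F\circ h\in W^{1/2,2}(\de\O)$, as a difference of two such functions.)

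The crux is a pointwise domination of the difference quotients of $g_k:=\F-\F_k$ by those of $\F\circ h$. Since $g_k(0)=0$ and $g_k'=\Psi-\Psi_k$ satisfies $0\le g_k'\le\Psi=\F'$, the increment of $g_k$ over any interval lies between $0$ and the corresponding increment of $\F$; hence, for a.e. $x,y\in\de\O$,
$$
|g_k(h(x))-g_k(h(y))|\le|\F(h(x))-\F(h(y))|,\qquad |g_k(h(x))|\le|\F(h(x))|.
$$
By hypothesis $\F\circ h\in W^{1/2,2}(\de\O)$, so the right-hand sides belong respectively to $L^2(\de\O\times\de\O,\,|x-y|^{-n}dx\,dy)$ and to $L^2(\de\O)$; combined with the pointwise vanishing of the quotients noted above, the dominated convergence theorem gives $\|g_k\circ h\|_{L^2(\de\O)}\to0$ and the vanishing of the Gagliardo seminorm of $g_k\circ h$, i.e. $\F_k\circ h\to\F\circ h$ in $W^{1/2,2}(\de\O)$. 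The conclusion then follows from the estimate $|\Dom[u]-\Dom[v]|\le\Dom[u-v]+2\,\Dom[v]^{1/2}\Dom[u-v]^{1/2}$ (Cauchy--Schwarz for the nonnegative form $\Dom$, cf.\ \reff{eq:ellipt}) applied to $u=P(\F_k\circ h)$, $v=P(\F\circ h)$.

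The step I expect to require the most care is precisely the domination, and it is here that \reff{eq:Psi} is used: without the one-sidedness $0\le\Psi-\Psi_k\le\Psi$, cancellation inside $\int_t^s\Psi$ can make $|\F(h(x))-\F(h(y))|$ strictly smaller than $|g_k(h(x))-g_k(h(y))|$, so the only obvious majorant would be the increment of $t\mapsto\int_0^t|\Psi|$, which need not lie in $W^{1/2,2}(\de\O)$ under the stated hypotheses; the sign restriction coming from \reff{eq:Psi} is exactly what makes this majorant coincide, up to a constant, with $\F\circ h$.
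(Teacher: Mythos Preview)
Your proof is correct and follows essentially the same route as the paper: establish $\F_k\circ h\to\F\circ h$ in $W^{1/2,2}(\de\O)$ by combining pointwise convergence with dominated convergence, the majorant for the Gagliardo difference quotient being $|\F\circ h(x)-\F\circ h(y)|$, and then pass through the boundedness of $P$ and the continuity of $\Dom$. The paper states the domination with an inessential factor~$2$ and leaves implicit the sign consideration for $\Psi$ that you make explicit via \reff{eq:Psi}, but the argument is otherwise identical.
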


\begin{proof}
Obviously,
$$
\F_{k}\circ h(x)-\F\circ h(x)=\int_{0}^{h(x)}[\Psi_{k}(t)-\Psi(t)]\, dt
\to 0 \quad a.e.
$$
and
$$
|\,[\F_{k}\circ h(x)-\F\circ h(x)]-[\F_{k}\circ h(y)-\F\circ h(y)]\,| 
\leq 2\, |\F\circ h(x)-\F\circ h(y)|\, .
$$

In view of the Lebesgue dominated convergence theorem, these
inequalities
imply  $\F_{k}\circ h\to \F\circ h$ in $W^{1/2,2}(\de\O)$. Therefore,
$P(\F_{k}\circ h)\to P(\F\circ h)$ in $W^{1,2}(\O)$ and \reff{eq:lemma}
holds.
\end{proof}

As far as $\Dom[\F\circ Ph]$ is concerned, we remark that
$$
\Dom[\F_{k}\circ Ph]=\int_{\O}(\Psi_{k}(Ph))^{2}a_{ij}\de_{i}(Ph)\de_{j}(Ph)\, dx
$$
tends to
$$
\int_{\O}(\Psi(Ph))^{2}a_{ij}\de_{i}(Ph)\de_{j}(Ph)\, dx
$$
because of the monotone convergence theorem. Therefore,
we set
$$
    \Dom[\F\circ Ph]=\lim_{k\to\infty}\Dom[\F_{k}\circ Ph].
$$

Note that neither $\Dom[\F\circ Ph]$ nor $\Dom[P(\F\circ h)]$ 
 needs
to be finite.

Lety $G(x,y)$ be the Green function of the Dirichlet
problem \reff{eq:dir} and ${\de/\de\n}$ the
co-normal operator
	   $$
	   {\de \over \de \n}= a_{ij} \cos(n,x_{j})\,
	   \de_{i}
	   $$
	  where $n$ is the exterior unit normal.
	  
	  \begin{lemma}
	      Let  the coefficients $a_{ij}$ of the
	      operator $L$ belong to $C^{\infty}(\overline{\O})$. There exist two positive
	      constants $c_{1}$ and $c_{2}$ such that
	      \begin{equation}
	          {c_{1}\over |x-y|^{n}} \leq {\de^{2}G(x,y)\over \de 
		  \n_{x}\de \n_{y}} \leq {c_{2}\over |x-y|^{n}}
	          \label{eq:ineqG}
	      \end{equation}
	      for any $x,y \in \de\O$, $x\neq y$.
	  \end{lemma}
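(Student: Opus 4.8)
The plan is to derive the two-sided bound \reff{eq:ineqG} by combining classical pointwise estimates for the Green function of a uniformly elliptic operator with smooth coefficients, together with the well-known fact that for a $C^{\infty}$ boundary the kernel $\de^{2}G/\de\n_{x}\de\n_{y}$ is precisely the Poisson kernel of the \emph{adjoint} Dirichlet problem differentiated once more in the normal direction, and that this kernel has a non-degenerate principal part. Concretely, I would first recall that, since $L$ is formally self-adjoint with coefficients in $C^{\infty}(\overline{\O})$ and $\de\O\in C^{\infty}$, elliptic regularity up to the boundary gives $G(\cdot,y)\in C^{\infty}(\overline{\O}\setminus\{y\})$ for each fixed $y$, and there are classical bounds of the form
\begin{equation}
    |G(x,y)|\leq c\,|x-y|^{2-n},\qquad
    |\nabla_{x}G(x,y)|\leq c\,|x-y|^{1-n},\qquad
    |\nabla_{x}\nabla_{y}G(x,y)|\leq c\,|x-y|^{-n}
    \label{eq:Gbounds}
\end{equation}
for $x,y\in\overline{\O}$, $x\neq y$ (for $n\geq 3$; for $n=2$ the logarithmic modification is irrelevant once two derivatives are taken). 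The upper bound in \reff{eq:ineqG} is then immediate by restricting the last estimate in \reff{eq:Gbounds} to the boundary and contracting with the bounded coefficients $a_{ij}$ and the unit normals.

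For the lower bound I would argue by a local freezing-of-coefficients and scaling argument. Fix $x_{0}\in\de\O$; after a smooth change of variables flattening the boundary near $x_{0}$ and a linear change turning $a_{ij}(x_{0})$ into the identity, the operator becomes a perturbation of the Laplacian, for which the Green function of a half-space has the explicit double co-normal derivative on the boundary equal to a positive constant multiple of $|x-y|^{-n}$ (this is exactly the gradient of the classical Poisson kernel of the half-space). The remaining steps are: (i) show that the error terms coming from the variable coefficients and the curvature of $\de\O$ contribute, after the parabolic-type rescaling $x=x_{0}+r\xi$, $y=x_{0}+r\eta$, a term that is $o(r^{-n})$ uniformly, so that the frozen model dominates for $|x-y|$ small; and (ii) handle the regime where $|x-y|$ stays bounded below by a positive constant — here $\de^{2}G/\de\n_{x}\de\n_{y}$ is a strictly positive continuous function on the compact set $\{(x,y)\in\de\O\times\de\O:|x-y|\geq\delta\}$, the positivity following from the strong maximum principle and Hopf's lemma applied to $x\mapsto -\de G(x,y)/\de\n_{x}$, which is the Poisson kernel of \reff{eq:dir} and hence strictly positive on $\de\O\setminus\{y\}$ with strictly negative inner normal derivative. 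Combining (i) and (ii) via a compactness/covering argument over $x_{0}\in\de\O$ yields the uniform constant $c_{1}$.

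The main obstacle is step (i): controlling the perturbation uniformly down to the diagonal. One cannot simply differentiate an asymptotic expansion of $G$, because the singularity is of the critical order $|x-y|^{-n}$ on an $(n-1)$-dimensional surface, so the error estimates must be genuinely pointwise and uniform in the base point $x_{0}$. The cleanest route is to write $G=G_{0}+(G-G_{0})$, where $G_{0}$ is the Green function of the frozen constant-coefficient operator in the flattened half-space, establish via the standard Levi/parametrix construction (or via energy estimates plus the De Giorgi–Nash–Moser bounds, which hold here since the coefficients are even $C^{\infty}$) that the remainder $G-G_{0}$ satisfies $|\nabla_{x}\nabla_{y}(G-G_{0})(x,y)|\leq c\,|x-y|^{-n+\gamma}$ for some $\gamma\in(0,1)$ determined by the modulus of continuity of the coefficients and the $C^{1,\gamma}$ character of the boundary, and then observe that the gain $|x-y|^{\gamma}$ makes the remainder negligible against the positive lower bound $c_{1}|x-y|^{-n}$ of the model term when $|x-y|$ is small. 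With these ingredients in place the proof is routine; I would therefore devote most of the write-up to \reff{eq:Gbounds} and to the uniform remainder estimate, citing the classical references for the parametrix construction rather than reproducing it.
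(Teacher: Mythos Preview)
Your approach is correct and is essentially the paper's: both freeze coefficients and flatten the boundary at a point $x_{0}\in\de\O$, identify the principal part of $\de^{2}G/\de\n_{x}\de\n_{y}$ with the half-space Laplacian kernel $2\,\o_{n}^{-1}|y'|^{-n}$, and absorb the remainder as a term of order $|x-y|^{-n+\e}$. The only differences are that the paper obtains this remainder estimate by directly citing the Poisson-kernel asymptotics of Maz'ya--Plamenevski\u{\i}~\cite{MP} rather than sketching a parametrix construction, and it does not explicitly separate out the far-from-diagonal regime (your step~(ii)) via Hopf's lemma.
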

	  
	  \begin{proof}
	      Let us fix a point $x_{0}$ on $\de\O$.
	      We consider a neighborhood of $x_{0}$
	  and introduce local coordinates $y=(y',y_{n})$
	 in such a way that $x_{0}$ corresponds to $y=0$,
	 $y_{n}=0$ is the tangent hyperplane and locally $\O$
	 is contained in the half-space $y_{n}>0$.
	 We may suppose that this change of variables is such that
	      $a_{ij}(x_{0})=\d_{ij}$.

	      It is known (see \cite{MP})
	      that the  Poisson kernel
	      $(\de/ \de\n_{y})G(x,y)$ in a neighborhood of $x_{0}$
	      is given by
	      $$
	      2\, \o_{n}^{-1}\, y_{n} |y|^{-n}
	      + {\cal O}\left(|y|^{2-n-\e}\right)
	      $$
	      where $\o_{n}$ is the measure of the unit sphere
	      in $\R^{n}$ and $\e>0$.
	      Moreover the derivative of the Poisson kernel with
	      respect to $y_{n}$ is equal to
	      $$
	      2\, \o_{n}^{-1}\, (|y|^{2}-y_{n}^{2})|y|^{-n-2} +
	      {\cal O}\left(|y|^{1-n-\e}\right)
	      $$
	      which becomes
	      \begin{equation}
		  2\, \o_{n}^{-1}\,  |y'|^{-n} + 
		  {\cal O}\left(|y|^{1-n-\e}\right)
	          \label{eq:asympt}
	      \end{equation}
	      for $y_{n}=0$. 
	      Formula \reff{eq:asympt} and the arbitrariness
	      of $x_{0}$ imply  \reff{eq:ineqG}. 
	  \end{proof}

\section{The main result}

\begin{theorem}\label{th:main}
    If $\Psi:\R\to \R_{+}$ satisfies condition \reff{eq:Psi}, then
    \begin{equation}
	\Dom[\F\circ Ph]\leq C\, \Dom[P(\F\circ h)]
	\label{eq:goal}
    \end{equation}
    for any $h\in W^{1/2,2}(\de\O)$, where $C$ is the constant 
    in \reff{eq:Psi}. 
 
\end{theorem}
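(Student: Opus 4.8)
The plan is to reduce the inequality to a pointwise estimate on the boundary via the explicit representation of both Dirichlet integrals in terms of the kernel $\partial^2 G/\partial\nu_x\partial\nu_y$, and then to exploit the reverse Cauchy inequality \reff{eq:Psi} on intervals of values of $h$. First I would work with the truncated functions $\Psi_k$ and $\F_k$, so that everything is finite and classical, and pass to the limit $k\to\infty$ only at the end, using the definitions \reff{eq:defmin} and $\Dom[\F\circ Ph]=\lim_k\Dom[\F_k\circ Ph]$ together with Lemma~1. So it suffices to prove $\Dom[\F_k\circ Ph]\leq C\,\Dom[P(\F_k\circ h)]$ with $C$ as in \reff{eq:Psi}, uniformly in $k$.

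The key step is a Green-type formula. For a function of the form $\Phi\circ u$ with $u=Ph$ $L$-harmonic, one has $L(\Phi\circ u)=-\partial_i\big(\Psi(u)\,a_{ij}\partial_j u\big)$, and integrating against the Green function gives a representation of $\F\circ Ph$ in $\O$ whose Dirichlet integral can be written, after integration by parts, as a double boundary integral
$$
\Dom[\F\circ Ph]=\frac12\int_{\de\O}\int_{\de\O}
\frac{\de^2 G(x,y)}{\de\nu_x\,\de\nu_y}\,
\big(\F(h(x))-\F(h(y))\big)^2\,dS_x\,dS_y,
$$
and, using $\F\circ h$ directly as Dirichlet data,
$$
\Dom[P(\F\circ h)]=\frac12\int_{\de\O}\int_{\de\O}
\frac{\de^2 G(x,y)}{\de\nu_x\,\de\nu_y}\,
\big((\F\circ h)(x)-(\F\circ h)(y)\big)^2\,dS_x\,dS_y.
$$
Wait — these two integrands are the \emph{same}. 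The point is subtler: the first Dirichlet integral is \emph{not} representable this way in general, because $\F\circ Ph$ need not be $L$-harmonic and its trace is $\F\circ h$. What is true is that the harmonic replacement $P(\F\circ h)$ minimizes the Dirichlet integral among functions with trace $\F\circ h$, which gives the trivial inequality already noted in the introduction; the content here is the reverse. So instead I would represent $\Dom[\F_k\circ Ph]$ directly as
$$
\Dom[\F_k\circ Ph]=\int_\O \Psi_k(Ph)^2\,a_{ij}\,\de_i(Ph)\,\de_j(Ph)\,dx,
$$
and the plan is to bound the integrand pointwise. Writing $v=Ph$ and using the Green representation $v(x)-v(y)$ in terms of $\partial^2 G/\partial\nu_x\partial\nu_y$ together with \reff{eq:ineqG}, one compares $\Psi_k(v)^2|\nabla v|^2$ against the analogous quantity for $P(\F_k\circ h)$ along the "layers" $\{v=t\}$. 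Concretely, I expect to use the coarea formula: $\Dom[\F_k\circ Ph]=\int_{\R}\Psi_k(t)^2\,\big(\int_{\{v=t\}}\cdots\big)\,dt$, and the reverse Cauchy inequality \reff{eq:Psi} applied to the interval $\sigma$ swept out by the values of $h$ between two boundary points converts $\overline{\Psi_k^2}$ into $C(\overline{\Psi_k})^2$, the latter being exactly $\big(\tfrac{1}{t_2-t_1}\int_{t_1}^{t_2}\Psi_k\big)^2=\big(\tfrac{\F_k(t_2)-\F_k(t_1)}{t_2-t_1}\big)^2$, which reproduces the increments of $\F_k\circ h$.

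The main obstacle is making the "layer cake" comparison rigorous: relating $\int_\O\Psi_k(v)^2|\nabla v|^2\,dx$ to a double boundary integral with the kernel $\partial^2 G/\partial\nu_x\partial\nu_y$ and the increments $\F_k(h(x))-\F_k(h(y))$, while controlling the passage through the bulk of $\O$. I expect the clean route is: (i) establish, via the Green formula and \reff{eq:ineqG}, the two-sided estimate $\Dom[w]\asymp \int\!\!\int |x-y|^{-n}\,(g(x)-g(y))^2\,dS_x\,dS_y$ for $w=Pg$, with constants from $c_1,c_2$; (ii) reduce \reff{eq:goal} to the purely one-dimensional / real-variable inequality comparing $\int\!\!\int|x-y|^{-n}(\F_k(h(x))-\F_k(h(y)))^2$ with $\int\!\!\int|x-y|^{-n}\int_{h(y)}^{h(x)}\Psi_k^2$; and (iii) deduce the latter from \reff{eq:Psi} by writing $\F_k(h(x))-\F_k(h(y))=\int_{h(y)}^{h(x)}\Psi_k$ and applying \reff{eq:Psi} with $\sigma$ the interval between $h(y)$ and $h(x)$, so that $(\F_k(h(x))-\F_k(h(y)))^2=(h(x)-h(y))^2(\overline{\Psi_k})^2\geq C^{-1}(h(x)-h(y))^2\,\overline{\Psi_k^2}=C^{-1}(h(x)-h(y))\int_{h(y)}^{h(x)}\Psi_k^2$, exactly matching the integrand of $\Dom[P(\F_k\circ h)]$ after the same $|x-y|^{-n}$ comparison. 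Finally, letting $k\to\infty$ and invoking Lemma~1 and the monotone-convergence definition of $\Dom[\F\circ Ph]$ yields \reff{eq:goal} with the same constant $C$, as claimed.
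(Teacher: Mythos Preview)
Your proposal has the right endgame but is missing the key identity that makes it work, and one of your reductions would destroy the sharp constant.

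The gap is in how you represent $\Dom[\F_k\circ Ph]$ on the boundary. Your step~(i) gives a two-sided estimate $\Dom[Pg]\asymp\int\!\!\int|x-y|^{-n}(g(x)-g(y))^2$, but this applies only to $L$-harmonic functions $Pg$, and $\F_k\circ Ph$ is \emph{not} $L$-harmonic. The coarea/layer-cake idea does not bridge this: slicing $\int_\O\Psi_k(v)^2\,a_{ij}\de_i v\,\de_j v$ along level sets of $v=Ph$ gives $\int_\R\Psi_k(t)^2\big(\int_{\{v=t\}}a_{ij}\de_j v\,n_i\,dS\big)dt$, and there is no obvious way to turn that one-parameter flux integral into the double boundary integral with integrand $(h(y)-h(x))\int_{h(x)}^{h(y)}\Psi_k^2$ that your step~(iii) needs. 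What the paper does instead is the one-line algebraic observation
\[
\Dom[\F_k\circ Ph]=\int_\O a_{ij}\,\de_i(Ph)\,\de_j\!\left(\int_0^{Ph}\Psi_k^2(\tau)\,d\tau\right)dx
=\Dom\Big[Ph,\ \int_0^{Ph}\Psi_k^2\Big],
\]
followed by the fact that for $L$-harmonic $Ph$ one has $\Dom[Ph,g]=\Dom[Ph,P(\tr g)]$. Now both entries are $L$-harmonic, and the \emph{bilinear} version of the boundary identity (polarizing the formula $\Dom[u]=\tfrac12\int\!\!\int(\tr u(x)-\tr u(y))^2\,\de^2_{\nu_x\nu_y}G\,d\sigma_xd\sigma_y$) gives the exact representation
\[
\Dom[\F_k\circ Ph]=\tfrac12\int_{\de\O}\!\!\int_{\de\O}(h(y)-h(x))\Big(\int_{h(x)}^{h(y)}\Psi_k^2\Big)\,\frac{\de^2 G(x,y)}{\de\nu_x\de\nu_y}\,d\sigma_xd\sigma_y.
\]
At this point your pointwise inequality in~(iii) applies verbatim and yields \reff{eq:goal} with the \emph{same} constant $C$, because one uses only the positivity of $\de^2_{\nu_x\nu_y}G$, never the two-sided comparison with $|x-y|^{-n}$. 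Your route through $|x-y|^{-n}$ would have produced an extra factor $c_2/c_1$ and lost the sharpness that Theorem~\ref{th:main2} asserts.

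Two smaller points. First, all of the above uses Lemma~2 and the Green-kernel identity, which require $a_{ij}\in C^\infty(\overline\O)$; the paper then handles bounded measurable coefficients by a separate approximation argument ($a_{ij}^{(k)}\to a_{ij}$ in measure, with $P_k h\to Ph$ in $W^{1,2}$), which your outline does not mention. Second, your limiting step is fine once the inequality is established at level $k$: $\Dom[\F\circ Ph]=\lim_k\Dom[\F_k\circ Ph]\le C\liminf_k\Dom[P(\F_k\circ h)]=C\,\Dom[P(\F\circ h)]$ by the definitions, exactly as in the paper.
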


\begin{proof}   
We suppose temporarily that $a_{ij}\in C^{\infty}$.

 Let  $u$ be a  solution of the 
 equation $Lu=0$, $u\in C^{\infty}(\overline\O)$. 
 We show that
 \begin{equation}
     \Dom[u]={1\over 2}\int_{\de\O}\int_{\de\O}(\tr u(x)-\tr u(y))^{2}
      {\de^{2}G(x,y)\over \de\n_{x}\de\n_{y}} d\s_{x}d\s_{y}.
     \label{eq:relQ}
 \end{equation}
 
 In fact, since
 $$
 L_{x}[(u(x)-u(y))^{2}]=2\, a_{hk}\de_{h}u\, \de_{k}u\, ,
 $$
   the integration by parts in \reff{eq:relQ} gives
   $$\displaylines{
   \int_{\de\O}\int_{\de\O}(\tr u(x)-\tr u(y))^{2}
	 {\de^{2}G(x,y)\over \de\n_{x}\de\n_{y}} d\s_{x}d\s_{y}=\cr
	 2\int_{\O}a_{hk}\de_{h}u\, \de_{k}u\, dx
	 \int_{\de\O}{\de G(x,y)\over \de \n_{y}}\, d\s_{y}=
	 2\Dom[u]\, ,
	 }
   $$
   and \reff{eq:relQ} is proved.

   Let $u\in W^{1,2}(\O)$ be a solution of 
   $Lu=0$ in $\O$ and let $\{u_{k}\}$ be a sequence of
   $C^{\infty}(\overline{\O}$ functions which tends to 
   $u$ in $W^{1,2}(\O)$.
   Since $\tr u_{k}\to\tr u$ in $W^{1/2,2}(\de\O)$, we see that
   $P(\tr u_{k})$ tends to $P(\tr u)=u$ in $W^{1,2}(\O)$
   and therefore $\Dom[P(\tr u_{k})]\to \Dom[u]$. This implies that
   \reff{eq:relQ} holds for any $u$  in $W^{1,2}(\O)$ 
   with
   $Lu=0$ in $\O$.

   Let now $u$ and $v$ belong to $W^{1,2}(\O)$ and $Lu=Lv=0$ in $\O$.
   Since
   $$
   \Dom[u,v]=4^{-1}(\Dom[u+v]-\Dom[u-v]),
   $$
   we can write
   \begin{equation}
       \Dom[u,v]=
         {1\over 2} \int_{\de\O}\int_{\de\O}(\tr u(x)-\tr u(y))
	 (\tr v(x)-\tr v(y))
               {\de^{2}G(x,y)\over \de\n_{x}\de\n_{y}} d\s_{x}d\s_{y}.
       \label{eq:B1}
   \end{equation}
   
   Note also that, if $h\in W^{1/2,2}(\de\O)$ and $g\in W^{1,2}(\O)$,
   we have
   \begin{equation}
       \Dom[Ph, P(\tr g)]=\Dom[Ph,g].
       \label{eq:B2}
   \end{equation}
   
    Suppose now that $h\in W^{1/2,2}(\de\O)$ 
    is such that $\F\circ h\in W^{1/2,2}(\de\O)$. We have
    $$\displaylines{
    \Dom[\F\circ Ph]=\cr
    \int_{\O}a_{ij}\de_{i}(\F\circ Ph)\de_{j}(\F\circ Ph)\, dx
    = \int_{\O}a_{ij}(\Psi(Ph))^{2}\de_{i}(Ph)\de_{j}(Ph)\, dx\, .
    }
    $$
    
    The last integral can be written as
    $$
    \int_{\O}a_{ij}\de_{i}(Ph)\de_{j}\left(\int_{0}^{Ph}
    \Psi^{2}(\t)\, d\t\right)dx\, ,
    $$
    and we have proved that 
    $$\Dom[\F\circ Ph]= \Dom\left(Ph, \int_{0}^{Ph}
    \Psi^{2}(\t)\, d\t\right)
    $$

    From \reff{eq:B1} and \reff{eq:B2}, we get
    \begin{equation}
       \Dom[\F\circ Ph]= \displaystyle {1\over 2} \int_{\de\O}\int_{\de\O}(h(y)-h(x))\int_{h(x)}^{h(y)}
	   \Psi^{2}(\t)\, d\t {\de^{2}G(x,y)\over \de\n_{x}\de\n_{y}}
	   d\s_{x}d\s_{y}\,  
        \label{eq:B=}
    \end{equation}
    
    In view of 
	\reff{eq:ineqG}, $\de^{2}G(x,y)/\de\n_{x}\de\n_{y}$ is positive, 
   and the condition \reff{eq:Psi} leads to
  \begin{equation}
        \Dom[\F\circ Ph] \leq {C\over 2} 
          \int_{\de\O}\int_{\de\O}\left(\int_{h(x)}^{h(y)}
                \Psi(\t)\, d\t\right)^{2}
                {\de^{2}G(x,y)\over \de\n_{x}\de\n_{y}}
                d\s_{x}d\s_{y}\, .
      \label{eq:N}
  \end{equation}
    
    Inequality \reff{eq:goal} is proved, since the right-hand side 
    in \reff{eq:N} is 
    nothing but $C\, \Dom[P(\F\circ h)]$ (see \reff{eq:relQ}).
    
    For any $h\in W^{1/2,2}(\de\O)$, the inequality
    \reff{eq:goal} follows from
    $$
    \Dom[\F\circ Ph]= \lim_{n\to\infty}\Dom[\F_{k}\circ Ph] \leq
    C\liminf_{n\to\infty} \Dom[P(\F_{k}\circ h)].
    $$

Let us suppose now that $a_{ij}$ only belong to $L^{\infty}(\O)$. 
There exist $a_{ij}^{(k)}\in C^{\infty}(\R^{n})$ such that
$a_{ij}^{(k)}\to a_{ij}$ 
in measure as $k\to \infty$.
We can assume that
$$
\Vert a_{ij}^{(k)}\Vert_{L^{\infty}(\O)}\leq K
$$
and that the operators $L^{(k)}=-\de_{i}(a_{ij}^{(k)}\de_{j}u)$ 
satisfy the ellipticity condition \reff{eq:ellipt}
with the same constant $\l$.

Let $u$ be a solution of the Dirichlet problem
\reff{eq:dir} and  let $u_{k}$ satisfy
$$
    \cases{L^{(k)}u_{k}=0 & in $\O$\cr
       \tr u_{k} = h  & on $\de\O$\, . 
       }
$$

Denote the matrices  $\{a_{ij}\}$ and $\{a_{ij}^{(k)}\}$ 
by $A$ and $A^{(k)}$ respectively.

Since we can write
$$
\dive A\nabla (u-u_{k})=  -\dive A \nabla u_{k} = 
 - \dive(A-A_{k})\nabla u_{k}
$$
we find that
$$
\Dom[u-u_{k}]\leq \Vert (A-A_{k}) \nabla u_{k}\Vert \, \Vert 
\nabla(u-u_{k})\Vert\, .
$$

Then there exists a constant $K$ such that
\begin{equation}
    \Vert \nabla(u-u_{k})\Vert \leq 
    K\, \Vert (A-A_{k})\nabla u_{k}\Vert\, .
    \label{eq:nabla}
\end{equation}

Denoting by $\Dom_{k}$ the quadratic form
$$
\Dom_{k}[u]=\int_{\O}a_{ij}^{(k)}\de_{i}u\, \de_{j}u\, dx\, ,
$$
we have
$$
\Dom_{k}[u_{k}]= 
\min_{u\in W^{1,2}(\O)\atop 
{\scriptscriptstyle\rm tr}\, u = h} \Dom_{k}[u]
$$
and
$$
\l\, \Vert \nabla u_{k}\Vert^{2} \leq \Dom_{k}[u_{k}].
$$

This shows that the sequence $\Vert \nabla u_{k}\Vert$
is bounded and the right-hand side
of \reff{eq:nabla} tends to $0$ as 
$k\to \infty$.

We are now in a position to prove \reff{eq:goal}. 
Clearly, it is enough 
to show that
\begin{equation}
    \Dom[\F_{m}\circ Ph]\leq C\, \Dom[P(\F_{m}\circ h)]\, ,
    \label{eq:goaln}
\end{equation}
where $\F_{m}$ is given by \reff{eq:defFn} for any $h\in 
W^{1/2,2}(\de\O)$ such that $\F\circ h$ belongs to the same space.

Because of what we have proved when the coefficients are
smooth, 
we may write
$$
\Dom_{k}[\F_{m}\circ P_{k}h]\leq C\, \Dom_{k}[P_{k}(\F_{m}\circ h)]\, ,
$$
where $P_{k}$ denotes the Poisson operator for $L^{(k)}$.

Formula \reff{eq:nabla}
shows that $P_{k}(\F_{m}\circ h)$ tends to $P(\F_{m}\circ h)$
(as $k\to 
\infty$) in $W^{1,2}(\O)$ and thus
\begin{equation}
    \lim_{k\to\infty} \Dom_{k}[P_{k}(\F_{m}\circ h)] = \Dom[P(\F_{m}\circ h)]\, .
    \label{eq:ref1}
\end{equation}

On the other hand, we have
$$\displaylines{
\nabla \F_{m}(P_{k}h)- \nabla \F_{m}(Ph)=\cr
\Psi_{m}(P_{k}h)(\nabla P_{k}h-\nabla P h) +
(\Psi_{m}(P_{k}h)-\Psi_{m}(Ph))\, \nabla Ph\, .
}
$$

In view of the continuity of $\Psi_{m}$, we find
$$
\Vert \nabla \F_{m}(P_{k}h)- \nabla \F_{m}(Ph)\Vert_{L^{2}(\O)}\to 0.
$$

This implies that $\Dom_{k}[\F_{m}\circ P_{k}h] \to \Dom[\F_{m}\circ Ph]$, which
together with \reff{eq:ref1}, leads to 
\reff{eq:goaln}.
\end{proof}

Under the assumption that the coefficients of the operator are smooth,
we can prove the inverse of  Thorem \ref{th:main}.

\begin{theorem}\label{th:main2}
    Let  the coefficients of the operator $L$ belong to 
    $C^{\infty}(\overline{\O})$. If \reff{eq:goal} holds
for any $h\in W^{1/2,2}(\de\O)$, then \reff{eq:Psi} is true
with the same constant $C$.
\end{theorem}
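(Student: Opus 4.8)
The plan is to reverse the chain of identities established in the proof of Theorem \ref{th:main}, and then to localize on the boundary so that the inequality \reff{eq:goal} forces the reverse Cauchy inequality \reff{eq:Psi} on an arbitrary interval. Since the coefficients are smooth, formulas \reff{eq:relQ}, \reff{eq:B1}, \reff{eq:B2}, \reff{eq:B=} and the two-sided bound \reff{eq:ineqG} on $\de^{2}G/\de\n_{x}\de\n_{y}$ are all available. In particular, for $h\in W^{1/2,2}(\de\O)$ with $\F\circ h\in W^{1/2,2}(\de\O)$, the assumed inequality $\Dom[\F\circ Ph]\leq C\,\Dom[P(\F\circ h)]$ becomes, after inserting \reff{eq:B=} on the left and \reff{eq:relQ} (applied to $P(\F\circ h)$) on the right,
\begin{equation}
   \int_{\de\O}\int_{\de\O}(h(y)-h(x))\!\!\int_{h(x)}^{h(y)}\!\!\Psi^{2}(\t)\,d\t\;
   \frac{\de^{2}G(x,y)}{\de\n_{x}\de\n_{y}}\,d\s_{x}d\s_{y}
   \leq C\int_{\de\O}\int_{\de\O}\Bigl(\int_{h(x)}^{h(y)}\!\!\Psi(\t)\,d\t\Bigr)^{2}
   \frac{\de^{2}G(x,y)}{\de\n_{x}\de\n_{y}}\,d\s_{x}d\s_{y}.
   \label{eq:inv1}
\end{equation}
Here the integrand on the left is pointwise nonnegative, because $(h(y)-h(x))\int_{h(x)}^{h(y)}\Psi^{2}\,d\t\geq 0$ and the kernel is positive by \reff{eq:ineqG}.

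Next I would choose test data $h$ that concentrates the mass of both double integrals on pairs $(x,y)$ for which $h(x)$ and $h(y)$ take only the two values $a$ and $b$ (the endpoints of the given interval $\s=(a,b)$). Fix two small disjoint boundary patches $S_{a},S_{b}\subset\de\O$ and set $h=a$ on $S_{a}$, $h=b$ on $S_{b}$; the contribution of the remaining pairs can be controlled (or killed in the limit) by shrinking the patches and using the integrability of $|x-y|^{-n}$ against the defect $(h(y)-h(x))^{2}$, which vanishes where $h$ is locally constant. On the pairs $(x,y)\in S_{a}\times S_{b}$ one has $(h(y)-h(x))\int_{h(x)}^{h(y)}\Psi^{2}=(b-a)\int_{a}^{b}\Psi^{2}$ on the left and $\bigl(\int_{a}^{b}\Psi\bigr)^{2}$ on the right, so both double integrals factor as the \emph{same} geometric quantity
\[
   \Gamma(S_{a},S_{b})=\int_{S_{a}}\int_{S_{b}}\frac{\de^{2}G(x,y)}{\de\n_{x}\de\n_{y}}\,d\s_{x}d\s_{y},
\]
which is strictly positive and finite by \reff{eq:ineqG}. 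Dividing \reff{eq:inv1} by $\Gamma(S_{a},S_{b})$ and letting the patches shrink, the cross terms disappear and what survives is exactly
\[
   (b-a)\int_{a}^{b}\Psi^{2}(\t)\,d\t\;\leq\;C\,\Bigl(\int_{a}^{b}\Psi(\t)\,d\t\Bigr)^{2},
\]
i.e. \reff{eq:Psi} on $\s$, with the same constant $C$.

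Two technical points must be handled carefully. First, the truncation: $\Psi$ itself need not be locally integrable, so one should run the argument with $\Psi_{N}$ in place of $\Psi$ (for which $\F_{N}\circ h\in W^{1/2,2}(\de\O)$ automatically, as already noted in the Preliminaries), obtain $(b-a)\int_{a}^{b}\Psi_{N}^{2}\leq C(\int_{a}^{b}\Psi_{N})^{2}$, and then pass to the limit $N\to\infty$ using monotone convergence on the left and — since the left side is then finite — dominated convergence on the right. Second, and this is where I expect the main obstacle, is the estimate showing that the off-diagonal/cross contributions (pairs where at least one of $x,y$ lies outside $S_{a}\cup S_{b}$, or the self-pairs $S_{a}\times S_{a}$ and $S_{b}\times S_{b}$) are negligible relative to $\Gamma(S_{a},S_{b})$ as the patches shrink. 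The self-pairs give zero since $h$ is constant there; for the genuinely mixed pairs one needs a quantitative comparison of $\int\!\int_{S_a\times S_b}|x-y|^{-n}$ against the "boundary-layer" contributions, which follows from \reff{eq:ineqG} together with elementary estimates for $\int_{|x-y|^{-n}}$ over thin collars of $\de\O$; choosing $S_{a},S_{b}$ to be coordinate balls of comparable radius $r$ centered at two fixed distinct points and letting $r\to 0$ makes $\Gamma(S_{a},S_{b})\asymp r^{n-1}\cdot r^{n-1}\cdot(\mathrm{dist})^{-n}$ dominate the error terms. Assembling these pieces yields \reff{eq:Psi}, completing the proof. \hfill\fbox{}
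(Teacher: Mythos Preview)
Your reduction to the boundary double-integral inequality \reff{eq:inv1} is correct and is exactly how the paper begins. The genuine gap is in the test data and the limit. With two \emph{shrinking} disjoint balls $S_{a},S_{b}$ of radius $r$ centred at fixed distinct points, the geometric factor $\Gamma(S_{a},S_{b})\asymp r^{2(n-1)}$ tends to \emph{zero}; it cannot dominate anything. You never say what $h$ is on $\de\O\setminus(S_{a}\cup S_{b})$, and any $W^{1/2,2}$ extension with values in $[a,b]$ must be non-constant on a region of fixed positive measure, so the contribution of that region to both sides of \reff{eq:inv1} is $O(1)$, not $o(r^{2(n-1)})$. After division by $\Gamma(S_{a},S_{b})$ those ``cross terms'' blow up rather than disappear. (If you try to avoid this by setting $h\equiv a$ outside a small neighbourhood of one point, then ``$S_{a}$'' has become essentially all of $\de\O$ and you are no longer in the two-small-patches regime.)

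The paper takes the opposite limit. One fixes a subdomain $\G\subset\de\O$ with smooth nonempty boundary, sets $h=a$ on $\G$, $h=b$ on $\de\O\setminus[\G]_{\e}$, and interpolates linearly in the distance to $\G$ across the collar $\g_{\e}=[\G]_{\e}\setminus\G$ of width $\e$. Direct estimates using \reff{eq:ineqG} show that the collar contributions $I_{1},I_{2}$ (and their counterparts $J_{1},J_{2}$ on the right) are $O(1)$ as $\e\to 0$, while the main term carries the common factor
\[
\int_{\G}\int_{\de\O\setminus[\G]_{\e}}\frac{\de^{2}G(x,y)}{\de\n_{x}\de\n_{y}}\,d\s_{x}\,d\s_{y}\ \geq\ c\,\log(1/\e)\ \to\ \infty,
\]
because the two regions abut along an $(n-2)$-dimensional interface. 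Dividing by this \emph{divergent} factor annihilates the $O(1)$ errors and leaves precisely $(b-a)\int_{a}^{b}\Psi^{2}\leq C\bigl(\int_{a}^{b}\Psi\bigr)^{2}$. The point you are missing is that the main geometric weight must diverge, not vanish; your proposed limit $r\to 0$ goes the wrong way.
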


\begin{proof}
    Let $\G$ be a subdomain of $\de\O$ with smooth non-empty 
    boundary. We choose a sufficiently small $\e>0$ and denote the
    $\e$-neighborhood of $\G$ by $[\G]_{\e}$. 
   We set $\g_{\e}=[\G]_{\e}\setminus\G$ and
    denote by $\d(x)$ the distance of the point $x$ from $\G$.

    Let $a$ and $b$ be different real numbers
      and let $h$ be the function defined on
     $\de\O$ by
    $$
        h(x)\cases{=a & if $x\in\G$\cr
	=a+(b-a)\, \e^{-1}\d(x) & if $x\in \g_{\e}$\cr
	=b & if $x\in\de\O\setminus [\G]_{\e}$.}
    $$

    We know from \reff{eq:B=} that $\Dom[\F\circ Ph]$ is equal to
    $$
    \int_{\de\O}d\s_{x}\int_{\de\O}Q(x,y)\, d\s_{y}
    $$
    where
    $$
    Q(x,y)={1\over 2}\, (h(y)-h(x))\left(\int_{h(x)}^{h(y)}
	   \Psi^{2}(\t)\, d\t\right) {\de^{2}G(x,y)\over 
	   \de\n_{x}\de\n_{y}}\, .
    $$

We can write
 $$
 \int_{\de\O}d\s_{x}\int_{\de\O}Q(x,y)\, d\s_{y}= I_{1}+I_{2}+I_{3}
 $$
where
$$
\displaylines{
I_{1}=\int_{\g_{\e}}d\s_{x}\int_{\g_{\e}}Q(x,y)\, d\s_{y}\, ,\cr
I_{2}=\int_{\de\O\setminus \g_{\e}}d\s_{x}\int_{\g_{\e}}Q(x,y)\, d\s_{y}
+ \int_{\g_{\e}}d\s_{x}\int_{
\de\O\setminus \g_{\e}}Q(x,y)\, d\s_{y}\, ,
\cr
I_{3}=\int_{\G}d\s_{x}\int_{\de\O\setminus [\G]_{\e}}Q(x,y)\, d\s_{y}
+ \int_{\de\O\setminus [\G]_{\e}}d\s_{x}\int_{\G}Q(x,y)\, d\s_{y}\, .
}
$$

The right-hand estimate in \reff{eq:ineqG} leads to 
\begin{equation}
    I_{1}\leq c\left({b-a\over 
    \e}\right)^{2}\int_{\g_{\e}}d\s_{x}
    \int_{\g_{\e}}(\d(x)-\d(y))^{2}|x-y|^{-n}d\s_{y}.
    \label{eq:0}
\end{equation}

The integral
 $$
 \int_{\g_{\e}}(\d(x)-\d(y))^{2}|x-y|^{-n}d\s_{y}
 $$
with $x\in\g_{\e}$ is majorized by
 $$
c \int_{0}^{\e}(t-\d(x))^{2}dt
\int_{\R^{n-2}}(|\h|+(t-\d(x)))^{-n}d\h={\cal O}(\e)\, .
 $$

 This estimate and \reff{eq:0}
 imply $I_{1}={\cal O}(1)$ as $\e\to 0^{+}$.

 Since
  $$
  \left|\int_{\G}d\s_{x}\int_{\g_{\e}}Q(x,y)\, d\s_{y}\right| \leq
  c\left({b-a\over 
      \e}\right)^{2}\int_{\G}d\s_{x}\int_{\g_{\e}}\d^{2}(y)|x-y|^{-n}d\s_{y}
  $$
  and the integral
  $$
  \int_{\g_{\e}}\d^{2}(y)d\s_{y}\int_{\G}|x-y|^{-n}d\s_{x}
  $$
  does not exceed
  $$
  c\int_{0}^{\e}t^{2}dt\int_{\R^{n-2}}(|\h|+t)^{-n+1}d\h = {\cal 
  O}(\e^{2})\, ,
  $$
  we find
  \begin{equation}
      \int_{\G}d\s_{x}\int_{\g_{\e}}Q(x,y)\, d\s_{y}= {\cal 
       O}(1).
      \label{eq:*}
  \end{equation}

  Analogously, 
  \begin{eqnarray}
  & \displaystyle \left|\int_{\de\O\setminus [\G]_{\e}}d\s_{x}\int_{\g_{\e}}Q(x,y)\,
  d\s_{y}\right| 
  \leq & \nonumber \\
  &\displaystyle  c\left({b-a\over 
      \e}\right)^{2}\int_{\de\O\setminus [\G]_{\e}}d\s_{x}\int_{\g_{\e}}(\e-\d(y))^{2}|x-y|^{-n}d\s_{y}
   ={\cal O}(1). &
   \label{eq:**}
  \end{eqnarray}
  
  Exchanging the roles of $x$ and $y$ in the previous argument,
  we arrive at the estimate
   $$
   \int_{\g_{\e}}d\s_{x}\int_{
   \de\O\setminus \g_{\e}}Q(x,y)\, d\s_{y}
 =  {\cal O}(1)\, .
   $$
   
   Combining this with \reff{eq:*} and \reff{eq:**},
    we see that $I_{2}={\cal O}(1)$.

    Let us consider $I_{3}$. Since the two terms in the definition
     of $I_{3}$ are equal, we have
     $$
     I_{3}=2\, (b-a)\int_{a}^{b}\Psi^{2}(\t)\, d\t
     \int_{\G}d\s_{x}\int_{\de\O\setminus [\G]_{\e}}
     {\de^{2}G(x,y)\over 
     \de\n_{x}\de\n_{y}}\, d\s_{y}\, .
     $$
     
     By the left inequality in \reff{eq:ineqG},
    $$
	 I_{3}\geq 2\, c_{1}(b-a)\int_{a}^{b}\Psi^{2}(\t)\, d\t
	  \int_{\G}d\s_{x}\int_{\de\O\setminus [\G]_{\e}} |x-y|^{-n}d\s_{y}\, .
$$

    There exists $\ro>0$ such that the integral over
    $\G\times (\de\O\setminus [\G]_{\e})$ of $|x-y|^{-n}$ admits
    the lower estimate by
   $$
   c\int_{-\ro}^{0}dt\int_{\e}^{\ro}ds \int_{|\t|\leq\ro}d\t 
   \int_{|\h|\leq \ro}
   (|\h-\t|+ |s-t|)^{-n}d\h
   $$
which implies
\begin{equation}
    \int_{\G}d\s_{x}\int_{\de\O\setminus [\G]_{\e}}
	 {\de^{2}G(x,y)\over 
	 \de\n_{x}\de\n_{y}}\, d\s_{y}\geq c\, \log(1/\e).
     \label{eq:I3dis}
 \end{equation}
 
 Now we deal with $\Dom[P(\F\circ h)]$. 
 This can be written as $J_{1}+J_{2}+J_{3}$, where 
 $J_{s}$ are defined as $I_{s}$ ($s=1,2,3$) with the 
 only difference that $Q(x,y)$ is replaced by
 $$
{1\over 2} \left(\int_{h(x)}^{h(y)}
	    \Psi(\t)\, d\t\right)^{2} {\de^{2}G(x,y)\over 
	    \de\n_{x}\de\n_{y}}\, .
 $$
 
 As before, 
 $J_{1}={\cal O}(1)$, $J_{2}={\cal O}(1)$ and
 $$
     J_{3}=2\, \left(\int_{a}^{b}\Psi(\t) d\t\right)^{2}
	  \int_{\G}d\s_{x}\int_{\de\O\setminus [\G]_{\e}}
	  {\de^{2}G(x,y)\over 
	  \de\n_{x}\de\n_{y}}\, d\s_{y}\, .
$$

Inequality \reff{eq:goal} for the function $h$
can be written as
$$
I_{3}+{\cal O}(1) \leq C\, (J_{3}+{\cal O}(1))
$$
i.e.
$$
\displaylines{
{\cal O}(1)+ (b-a)\int_{a}^{b}\Psi^{2}(\t)\, d\t
     \int_{\G}d\s_{x}\int_{\de\O\setminus [\G]_{\e}}
     {\de^{2}G(x,y)\over 
     \de\n_{x}\de\n_{y}}\, d\s_{y}  \leq \cr
    C\left( {\cal O}(1)+ \left(\int_{a}^{b}\Psi(\t) d\t\right)^{2}
	       \int_{\G}d\s_{x}\int_{\de\O\setminus [\G]_{\e}}
	       {\de^{2}G(x,y)\over 
	       \de\n_{x}\de\n_{y}}\, d\s_{y}  
	       \right).
	       }
$$

Dividing both sides by 
$$
\int_{\G}d\s_{x}\int_{\de\O\setminus [\G]_{\e}}
	       {\de^{2}G(x,y)\over 
	       \de\n_{x}\de\n_{y}}\, d\s_{y} 
$$
and letting $\e\to 0^{+}$ we arrive at \reff{eq:rci},
referring to \reff{eq:I3dis}.
The proof is complete.
\end{proof}

\textbf{Remark.}  Inspection of the proofs of Theorems 
\ref{th:main} and \ref{th:main2}
shows that if the coefficients are smooth and 
 Dirichlet data on $\de\O$ are nonnegative, 
we have also the following result:

\noindent\textit{The inequality
$$
\Dom[\F\circ Ph]\leq C_{+}\, \Dom[P(\F\circ h)]
$$
 holds   for any nonnegative $h\in W^{1/2,2}(\de\O)$
    if and only if
$$
\overline{\Psi^{2}} \leq  C_{+} (\overline{\Psi})^{2}
$$
for all finite intervals $\s\subset \R_{+}$. }

\textbf{Example.} As a simple application of this Theorem,
consider the case of the  Laplace operator
and the function  $\Psi(t)=|t|^{\a}$ ($\a> -1/2$). 
Let $C_{\a}$ and $C_{\a,+}$ be the following constants:
\begin{eqnarray}
& \displaystyle C_{\a} ={(\a+1)^{2}\over 2\a+1}\, 
\sup_{t\in\R}{(1-t)(1-t^{2\a +1})\over (1- |t|^{\a}t)^{2}}
& \label{eq:Ca}\\
& \displaystyle C_{\a,+} ={(\a+1)^{2}\over 2\a+1}\, 
\sup_{t\in\R_{+}}{(1-t)(1-t^{2\a +1})\over (1- t^{\a+1})^{2}}\, .
& \label{eq:Ca+}
\end{eqnarray}

Theorem \ref{th:main} shows that
\begin{equation}
    \int_{\O}|\nabla(|Ph|^{\a}Ph)|^{2}dx\leq
    C_{\a}\int_{\O}|\nabla P(|h|^{\a}h)|^{2}dx\, ,
    \label{eq:ex}
\end{equation}
for any $h\in W^{1/2,2}(\de\O)$, 
where $P$ denotes the  harmonic Poisson operator for $\O$.

In view of the Remark above, we have also
\begin{equation}
    \int_{\O}|\nabla(Ph)^{\a+1}|^{2}dx\leq
    C_{\a,+}\int_{\O}|\nabla P(h^{\a+1})|^{2}dx\, ,
    \label{eq:ex+}
\end{equation}
for any nonnegative $h\in W^{1/2,2}(\de\O)$. 
Because of Theorem \ref{th:main2}, the constants $C_{\a}$ 
and $C_{\a,+}$ are the best possible in \reff{eq:ex} and 
\reff{eq:ex+}. 

Hence the problem of finding explicitly the best constant in such
inequalities is   reduced to the determination of the supremum
in \reff{eq:Ca} and \reff{eq:Ca+}.

One can check that $C_{\a,+}=(\a+1)^{2}/(2\a+1)$.
In the particular case $\a=1$, 
$C_{1}$ can also be determined. This leads to the inequalities
\reff{eq:intr1} and \reff{eq:intr2}.

\end{document}